
\documentclass{amsart}

 \usepackage[colorlinks=true]{hyperref}
\hypersetup{urlcolor=blue, citecolor=red}
\usepackage[smallfootnotes=false]{hypdvips}
  \textheight=8.2 true in
   \textwidth=5.0 true in
    \topmargin 30pt
     \setcounter{page}{1}


\usepackage[mathscr]{eucal}
\usepackage{amssymb}
\usepackage{graphicx}


\newtheorem{theorem}{Theorem}[section]

\newtheorem{proposition}{Proposition}[section]

\newtheorem{definition}[theorem]{Definition}

\newcommand{\ep}{\varepsilon}


\newcommand{\rref}[1]{(\ref{#1})}
\newcommand{\HM}[1]{\mathbb{H}^{{#1}}_{\so}}
\def\beq{\begin{equation}}
\def\feq{\end{equation}}
\def\barray{\begin{array}}
\def\farray{\end{array}}
\def\hnu{\overline{\nu}}
\def\Tc{T_{\tt{c}}}
\def\Ta{T_{\tt{a}}}
\def\ua{u_{\tt{a}}}
\def\Rr{{\mathscr{R}}}
\def\zz{{\scriptscriptstyle{0}}}
\def\ss{{\scriptscriptstyle{\Sigma}}}

\def\so{\ss \zz}

\def\Dv{\mathbb{\DD}'}

\def\LP{\mathfrak{L}}
\def\dive{\mbox{div}}
\def\la{\langle}
\def\ra{\rangle}
\def\leqs{\leqslant}
\def\geqs{\geqslant}
\def\sc{ {\scriptstyle{\bullet} }}
\def\ep{\epsilon}
\def\parn{\par \noindent}
\def\reali{{\bf R}}
\def\complessi{{\bf C}}
\def\interi{{\bf Z}}

\def\To{ {\bf T} }
\def\Td{ {\To}^d }
\def\Tt{ {\To}^3 }
\def\Zd{ \interi^d }
\def\Zt{ \interi^3 }
\def\PPP{{\mathscr P}}
\def\DD{D}
\def\Dd{{\mathcal D}}
\def\vain{\rightarrow}


\makeatletter \@addtoreset{equation}{section}

\makeatother

\title[A posteriori estimates for Euler and NS equations]
      {A posteriori estimates for Euler and Navier-Stokes equations}

\author[C. Morosi, M. Pernici and L. Pizzocchero]{}

\subjclass{Primary: 35Q31, 35Q30; Secondary: 76B03, 76D03, 68W30.}
 \keywords{Euler and Navier-Stokes equations,
existence and regularity theory, theoretical approximation,
symbolic computation.}

 \email{carlo.morosi@polimi.it}
 \email{mario.pernici@infn.it}
 \email{livio.pizzocchero@unimi.it}


\begin{document}
\centerline{\bf{To appear in the Proceedings of the
XIV International Conference on Hyperbolic Problems:}}
\centerline{\bf{Theory, Numerics and Applications (HYP2012: Padova, June 25-29, 2012)}}
\vskip 0.8cm
\noindent
\maketitle

\centerline{\scshape Carlo Morosi }
\medskip
{\footnotesize
 \centerline{Dipartimento di Matematica, Politecnico di Milano}
   \centerline{P.za Leonardo da Vinci 32, I-20133 Milano, Italy}
} 

\medskip

\centerline{\scshape Mario Pernici}
\medskip
{\footnotesize
 \centerline{Istituto Nazionale di Fisica Nucleare, Sezione di Milano}
   \centerline{Via Celoria 16, I-20133 Milano, Italy}
}

\medskip

\centerline{\scshape Livio Pizzocchero}
\medskip
{\footnotesize
 \centerline{Dipartimento di Matematica, Universit\`a di Milano}
   \centerline{Via Cesare Saldini 50, I-20133 Milano, Italy}
   \centerline{and Istituto Nazionale di Fisica Nucleare, Sezione di Milano, Italy}
}

\bigskip


\begin{abstract}
The first two sections of this work review the framework
of \cite{appeul} for approximate solutions of the
incompressible Euler or Navier-Stokes (NS) equations on a torus $\Td$,
in a Sobolev setting.
This approach starts from
an approximate solution $\ua$ of the Euler/NS Cauchy problem
and, analyzing it \emph{a posteriori}, produces
estimates on the interval of existence of
the exact solution $u$ and on the distance between
$u$ and $\ua$. The next two sections
present an application to the Euler Cauchy problem,
where $\ua$ is a Taylor polynomial in the time variable $t$;
a special attention is devoted to the case
$d=3$, with an initial datum for which
Behr, Ne$\check{\mbox{c}}$as and Wu have
conjectured a finite time blowup \cite{Nec}.
These sections combine the general approach of \cite{appeul}
with the computer algebra methods developed in \cite{bnw};
choosing the Behr-Ne$\check{\mbox{c}}$as-Wu datum,
and using for $\ua$ a Taylor polynomial of order 52,
a rigorous lower bound is derived on the interval of existence
of the exact solution $u$, and an estimate is obtained for the $H^3$ Sobolev
distance between $u(t)$ and $\ua(t)$.
\end{abstract}

\section{Preliminaries.}
Throughout this work we fix a space
dimension $d \in \{2,3,...\}$; in the application of section 4 we will
put $d=3$.
For $a, b$ in $\reali^d$ or $\complessi^d$ we put $a \sc b :=
\sum_{r=1}^d a_r b_r$ and $|a| := \sqrt{\overline{a} \sc a}$,
with $\overline{\phantom{x}}$ indicating the complex conjugate. \par
Let us consider the $d$-dimensional torus $\Td :=
(\reali/2 \pi \interi)^d$; we denote with $(e_k)_{k \in \Zd}$ the Fourier basis
made of the functions $e_k : \Td \vain \complessi$, $e_k(x) := (2 \pi)^{-d/2}
e^{i k \sc x}$.
Here and in the sequel,  ``a vector field on $\Td$'' means
``an $\reali^d$-valued distribution on $\Td$'' (see, e.g., \cite{accau}); we
write $\Dv(\Td) \equiv \Dv$ for the space of such distributions. Any $v
\in \Dv$ has a weakly convergent Fourier expansion $v = \sum_{k \in \Zd} v_k e_k$, with
coefficients $v_k \in \complessi^d$ such that $\overline{v_k} = v_{-k}$.
\vfill \eject \noindent
In the sequel $\mathbb{L}^p(\Td) \equiv \mathbb{L}^p$ denotes the space
of $L^p$ vector fields $\Td \vain \reali^d$.
For all  $n \in \reali$ we introduce the Sobolev
space of zero mean, divergence free vector fields of order $n$; this is
\par
\vbox{
\beq \HM{n}(\Td) \equiv \HM{n} := \Big\{ v \in \Dv~|~
\int_{\Td} \! \! v~ d x = 0,~\dive \, v = 0,~
\sqrt{-\Delta}^{\,n} v \in \mathbb{L}^2~ \}  \feq
$$ = \Big\{ v \in \Dv~|~v_0 = 0,~ k \sc v_k = 0~\mbox{for all}~ k,~
\sum_{k \in \Zd \setminus \{0 \}} |k|^{2 n} |v_k|^2 < + \infty~\} $$
}
\noindent
(in the above, $\int_{\Td} v~d x$ indicates the action
of $v$ on the test function $1$ and $\sqrt{-\Delta}^{\,n} v := \sum_{k \in \Zd \setminus \{0 \}}
|k|^{n} v_k e_k$). $\HM{n}$ is a Hilbert space with the inner product
and the norm
\beq \la v | w \ra_n := \la \sqrt{-\Delta}^n v | \sqrt{-\Delta}^n w \ra_{L^2}
= \sum_{k \in \Zd \setminus \{0 \}} |k|^{2 n} \overline{v_k} \sc w_k ,
\quad \| v \|_n := \sqrt{\la v | v \ra_n}~; \feq
if $m \leqs n$ then $\HM{n} \subset \HM{m}$.
\subsection{The bilinear map for the Euler or Navier-Stokes (NS) equations.}
Consider two vector fields $v, w$ on $\Td$ such that
$v \in \mathbb{L}^2$ and $\partial_r w \in \mathbb{L}^2$ for $r=1,...,d$;
then we have a well defined vector field $v \sc \partial w \in \mathbb{L}^1$
of components $(v \sc \partial w)_r := \sum_{s=1}^d v_s \partial_s w_r$;
we can apply to this the Leray projection $\LP$, sending $\Dv$
onto the space of divergence free vectors fields, and form the vector field
\beq
\PPP(v, w) := - \LP(v \sc \partial w)~.
\feq
The bilinear map $\PPP$: $(v, w) \mapsto \PPP(v,w)$, which is
a main character of the incompressible Euler/NS equations, is known to
possess the following properties: \par\noindent
(i) For each $n > d/2$, $\PPP$ is continuous from
$\HM{n} \times \HM{n+1}$ to $\HM{n}$; so, there is a
constant $K_{n d} \equiv K_n$ such that
\beq \| \PPP(v, w) \|_n \leqs K_n \| v \|_n \| w \|_{n+1}
\quad \mbox{for $v \in \HM{n}$, $w \in \HM{n+1}$}~. \label{basic} \feq
(ii) For each $n > d/2+1$, there is a constant $G_{n d} \equiv G_n$
such that
\beq |\la \PPP(v,w) | w \ra_n | \leqs G_n \| v \|_n
\| w \|^{2}_{n} \quad \mbox{for $v \in \HM{n}$, $w \in \HM{n+1}$}~.
\label{katineq} \feq
The result (ii) is due to Kato, see \cite{Kato}.
In papers \cite{cog} \cite{cok}, \rref{basic} and \rref{katineq}
are called the ``basic inequality'' and the ``Kato inequality'', respectively;
in these papers, computable upper and lower bounds are given for the
sharp constants appearing therein. From here to the end of this work,
$K_n$ and $G_n$ are constants fulfilling the previous inequalities
(and not necessarily sharp). From \cite{cog} \cite{cok} we know that
we can take
\beq K_3 = 0.323~, \qquad G_3 = 0.438 \qquad \mbox{if $d=3$}~; \label{k3g3} \feq
these values will be useful in the sequel.
\subsection{The Euler/NS Cauchy problem.} Let us fix a Sobolev order
\beq n \in \big({d \over 2} + 1, + \infty\big)~. \label{propen} \feq
We choose a ``viscosity coefficient'' $\nu \in [0,+\infty)$,
and put
\beq \hnu := \left\{ \barray{ll} 1 & \mbox{if $\nu=0$,} \\
2 & \mbox{if $\nu >0$.} \farray \right. \feq Furthermore, we
choose a ``forcing'' \beq f \in C([0,+\infty), \HM{n})
\label{for}\feq
and an initial datum \beq u_0 \in
\HM{n+\hnu}~. \label{in}\feq
\begin{definition}
\textsl{
The Cauchy problem for the (incompressible) fluid
with viscosity $\nu$, initial datum $u_0$ and forcing $f$ is the following:
\beq \mbox{Find}~
u \in C([0, T), \HM{n+\hnu}) \cap C^1([0,T), \HM{n}) \quad \mbox{such that} \label{cau} \feq
$$ {d u \over d t} = \nu \Delta u + \PPP(u,u) + f~, \qquad u(0) = u_0 $$
(with $T \in (0, + \infty]$, depending on $u$). As usually, we
speak of the ``Euler Cauchy problem'' if $\nu=0$, and
of the ``NS Cauchy problem'' if $\nu >0$}. \par
\end{definition}
It is known \cite{Kat2} that the above Cauchy problem has a unique
maximal (i.e., non extendable) solution; any solution
is a restriction of the maximal one.
\section{Approximate solutions of the Euler/NS Cauchy problem}
We consider again the Cauchy problem \rref{cau}, for
given $n, \nu, f, u_0$ as in the previous section.
The definitions and the theorem that follow are taken from
\cite{appeul}.
\begin{definition}
\textsl{
An approximate solution of problem \rref{cau} is
any map $\ua \in C([0, \Ta), \HM{n+\hnu}) \cap C^1([0,\Ta), \HM{n})$
(with $\Ta \in (0,+\infty]$).
Given such a function, we stipulate (i) (ii)}. \par\noindent
\textsl{
(i) The differential
error of $\ua$ is
\beq {d \ua \over d t} - \nu \Delta \ua - \PPP(\ua,\ua) - f~
\in C([0,\Ta), \HM{n})~;  \feq
the datum error is
\beq \ua(0) - u_0 \in \HM{n+ \hnu}~. \feq
(ii) Let $m \in \reali, m \leqs n$. A differential error estimator of order $m$ for $\ua$
is a function
\beq \ep_m \in C([0,\Ta), [0,+\infty))
\quad \mbox{such that} \feq
$$ \|({d \ua \over d t} - \nu \Delta \ua - \PPP(\ua,\ua) - f)(t)
\|_m \leqs \ep_m(t)~\mbox{~~for $t \in [0,\Ta)$}~. $$
Let $m \in \reali$, $m \leqs n + \hnu$.
A datum error estimator of order $m$ for $\ua$ is a real number
\beq \delta_m \in [0,+\infty) \quad \mbox{such that} \quad \| \ua(0) - u_0 \|_m \leqs \delta_m~;
\feq
a growth estimator of order $m$ for $\ua$ is a function
\beq \Dd_m \in C([0,\Ta), [0,+\infty))
\quad \mbox{such that} \quad \| \ua(t) \|_m \leqs \Dd_m(t)~\mbox{~~for $t \in [0,\Ta)$}~.
\label{din} \feq
In particular $\ep_m(t) :=
\|({d \ua/d t} - \nu \Delta \ua - \PPP(\ua,\ua) - f)(t)\|_m$,
$\delta_m := \| \ua(0) - u_0 \|_m$ and
$\Dd_m(t) := \| \ua(t) \|_m$ will be called the
tautological estimators of order $m$ for the differential error,
the datum error and the growth of $\ua$.}
\end{definition}
From here to the end of the section we consider
an approximate solution $\ua$ of problem \rref{cau}
of domain $[0, \Ta)$; this is assumed to possess
differential, datum error and growth estimators
of orders $n$ or $n+1$, indicated with $\ep_n, \delta_n, \Dd_n,
\Dd_{n+1}$. \par
\begin{definition}
\textsl{
Let $\Rr_n \in C([0,\Tc), [0,+\infty))$,
with $\Tc \in (0,\Ta]$. This function is said to fulfil the
control inequalities if
\beq {d^{+} \Rr_n \over d t} \geqs - \nu \Rr_n
+ (G_n \Dd_n + K_n \Dd_{n+1}) \Rr_n + G_n \Rr^2_n + \ep_n
~\mbox{everywhere on $[0,\Tc)$}, \label{cont1} \feq
\beq \Rr_n(0) \geqs \delta_n~. \label{cont2} \feq
In the above $d^{+}/ d t$ indicates the right, upper Dini derivative: so,
for all $t \in [0,\Tc)$,
$(d^{+} \Rr_n/ d t)(t) := \limsup_{h \vain 0^{+}} [\Rr_n(t+h) - \Rr_n(t)]/h$.
}
\end{definition}
\begin{proposition}
\label{main}
Assume
 there is a function $\Rr_n \in C([0,\Tc), [0,+\infty))$ fulfilling
the control inequalities; consider the maximal solution $u$ of the
Euler/NS Cauchy problem \rref{cau}, and denote its domain with $[0,T)$.
Then
\beq T \geqs \Tc~, \label{tta} \feq
\beq \| u(t) - \ua(t) \|_n \leqs \Rr_n(t) \qquad \mbox{for $t \in [0,\Tc)$}~.
\label{furth} \feq
\end{proposition}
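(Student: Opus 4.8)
The plan is to control the error field $w := u - \ua$ in the norm $\|\cdot\|_n$ by comparing the scalar function $\rho(t) := \|w(t)\|_n$ with the supersolution $\Rr_n$. I would work on the interval $[0,\tau)$ with $\tau := \min(T,\Tc)$, where both $u$ and $\Rr_n$ are defined and $w \in C([0,\tau),\HM{n+\hnu}) \cap C^1([0,\tau),\HM{n})$. Writing $\mathcal{E} := d\ua/dt - \nu\Delta\ua - \PPP(\ua,\ua) - f$ for the differential error and subtracting the approximate equation from the exact one \rref{cau}, the bilinearity of $\PPP$ in the form
\beq
\PPP(u,u) - \PPP(\ua,\ua) = \PPP(\ua,w) + \PPP(w,\ua) + \PPP(w,w)
\feq
yields the evolution equation $d w/d t = \nu\Delta w + \PPP(\ua,w) + \PPP(w,\ua) + \PPP(w,w) - \mathcal{E}$.

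Next I would estimate the growth of $\rho$. Taking the inner product $\la\, \sc \, | \, w \ra_n$ of this equation and using $\la\Delta w | w\ra_n = -\|w\|^2_{n+1} \leqs -\|w\|^2_n$, I would bound the three nonlinear terms as follows: the Kato inequality \rref{katineq} controls $|\la\PPP(\ua,w)|w\ra_n| \leqs G_n\|\ua\|_n\rho^2 \leqs G_n\Dd_n\rho^2$ and $|\la\PPP(w,w)|w\ra_n| \leqs G_n\rho^3$, while the basic inequality \rref{basic} together with Cauchy--Schwarz gives $|\la\PPP(w,\ua)|w\ra_n| \leqs \|\PPP(w,\ua)\|_n\,\rho \leqs K_n\|\ua\|_{n+1}\rho^2 \leqs K_n\Dd_{n+1}\rho^2$; finally $|\la\mathcal{E}|w\ra_n| \leqs \ep_n\rho$. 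Since $\rho^2 = \|w\|_n^2$ is $C^1$, wherever $\rho > 0$ the identity $\rho\,(d\rho/dt) = \la d w/d t|w\ra_n$ then produces, after dividing by $\rho$,
\beq
{d\rho \over dt} \leqs -\nu\rho + (G_n\Dd_n + K_n\Dd_{n+1})\rho + G_n\rho^2 + \ep_n~.
\feq
The point requiring care is that $\rho$ need not be differentiable where it vanishes, so I would phrase the bound through the upper right Dini derivative $d^{+}\rho/d t$, checking separately that at any instant with $\rho = 0$ one still has $d^{+}\rho/d t \leqs \ep_n$, consistent with the right-hand side. Thus $\rho$ is a Dini subsolution of exactly the inequality \rref{cont1} of which $\Rr_n$ is a supersolution.

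I would then invoke a comparison principle for Dini differential inequalities: the common right-hand side is continuous in $t$ and locally Lipschitz (indeed quadratic) in the scalar variable, $\rho$ satisfies the inequality with ``$\leqs$'' and $\Rr_n$ with ``$\geqs$'', and $\rho(0) = \delta_n \leqs \Rr_n(0)$ by \rref{cont2}; hence $\rho(t) \leqs \Rr_n(t)$ for all $t \in [0,\tau)$, which is \rref{furth} on $[0,\min(T,\Tc))$. The last and genuinely delicate step is to upgrade this to $T \geqs \Tc$. Suppose instead $T < \Tc$, so $\tau = T$. On $[0,T)$ the bound just obtained gives $\|u(t)\|_n \leqs \|\ua(t)\|_n + \rho(t) \leqs \Dd_n(t) + \Rr_n(t)$; since $\Dd_n$ and $\Rr_n$ are continuous on $[0,\Tc)$ and $T < \Tc$, the right-hand side stays bounded as $t \to T^{-}$, contradicting the standard continuation (blowup) criterion for the maximal solution \cite{Kat2}, whereby a finite maximal time forces $\|u(t)\|_n$ to be unbounded as $t \to T^{-}$. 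Therefore $T \geqs \Tc$, and \rref{tta} together with \rref{furth} holds on all of $[0,\Tc)$. I expect the main obstacle to be precisely this interplay between the a priori estimate and the maximal existence time — making the comparison argument and the continuation argument interlock rigorously — together with the Dini-derivative bookkeeping at the zeros of $\rho$.
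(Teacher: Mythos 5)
Your proposal is correct and follows essentially the same route as the paper: it establishes the Dini differential inequality $d^{+}\|u-\ua\|_n/dt \leqs -\nu\|u-\ua\|_n + (G_n\Dd_n+K_n\Dd_{n+1})\|u-\ua\|_n + G_n\|u-\ua\|_n^2 + \ep_n$ (which the paper delegates to Lemma 4.2 of \cite{appeul}, and which you correctly reconstruct via the decomposition $\PPP(u,u)-\PPP(\ua,\ua)=\PPP(\ua,w)+\PPP(w,\ua)+\PPP(w,w)$ together with the Kato and basic inequalities), then applies the \v{C}aplygin-type comparison lemma and concludes $T\geqs\Tc$ by contradiction with the maximality of $u$. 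Your extra care with the Dini derivative at zeros of $\rho$ and the explicit handling of the $\PPP(w,\ua)$ term via \rref{basic} are exactly the details the paper's sketch leaves implicit.
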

\begin{proof} (Sketch) One introduces the function
$\| u - \ua \|_n : t \in [0,T) \cap [0, \Ta) \mapsto  \| u(t) - \ua(t) \|_n$
and shows that
${d^{+} \| u - \ua \|_n/d t} \leqs$
$- \nu \, \| u - \ua \|_n$ $+ (G_n \Dd_n + K_n \Dd_{n+1})$ $\| u - \ua \|_n +
G_n \| u - \ua \|^2_n + \ep_n$,
(see Lemma 4.2 of \cite{appeul}, greatly indebted to
\cite{Che}); moreover,
$\| u(0) - \ua(0) \|_n \leqs \delta_n$.
From here, from the control inequalities \rref{cont1}
\rref{cont2} and from the \v{C}aplygin comparison lemma
one infers that
$\| u(t) - \ua(t)\|_n \leqs \Rr_n(t)$ for $t \in [0, T)
\cap [0,\Tc)$. Finally, it is $T \geqs \Tc$; in fact, it it were
$T < \Tc$, the previous inequality about
$u, \ua$ and $\Rr_n$ would imply $\limsup_{t \vain T^{-}} \| u(t) \|_n
< + \infty$, a fact contradicting the maximality assumption for $u$.
See \cite{appeul} for more details.
\end{proof}
Paper \cite{appeul} presents some applications of
the previous proposition, dealing with both the Euler case
$\nu=0$ and the NS case $\nu >0$; a special
attention is devoted therein to the approximate solutions
$\ua$ provided by the Galerkin method. \par
In this work we present an application of Proposition \ref{main}
to the Euler case $\nu=0$, choosing for $\ua$ a
polynomial in the time variable $t$.
In the next section we present this procedure
in general, giving the error estimators
for approximate solutions of this kind;
in the last section we apply
the procedure choosing for $u_0$
the so-called Behr-Ne$\check{\mbox{c}}$as-Wu
initial datum.
\section{Polynomial approximate solutions for the Euler equations}
Let us recall that $n \in (d/2+1,+\infty)$, and consider
the Euler Cauchy problem with a datum $u_0 \in \HM{n+1}$
and zero external forcing:
\beq \mbox{Find}~
u \in C([0, T), \HM{n+1}) \cap C^1([0,T), \HM{n}) \quad \mbox{such that} \label{cae} \feq
$$ {d u \over d t} = \PPP(u,u)~, \qquad u(0) = u_0~. $$
Let us choose an order $N \in \{0,1,2,...\}$ and consider as an approximate
solution for \rref{cae} a polynomial of degree $N$ in time, of the form
\beq u^N : [0, +\infty) \vain \HM{n+1}~,
\qquad t \mapsto u^N(t) := \sum_{j=0}^N u_j t^j
\qquad (u_j \in \HM{n+1}~\mbox{for all $j$})~. \label{defun} \feq
Here $u_0$ is the initial datum, and $u_j$ is to be determined
for $j=1,...,N$.
\begin{proposition}
(i) Let $u^N$ be as in \rref{defun}. The datum and differential errors of $u^N$ are
\beq u^N(0) - u_0 = 0~; \label{daer} \feq
\beq {d u^N \over d t}(t) - \PPP(u^N, u^N)(t) \label{eun} \feq
$$ =
\sum_{j=0}^{N-1} \Big[ (j+1) u_{j+1} - \sum_{\ell=0}^j \PPP(u_\ell, u_{j-\ell}) \Big] t^{j}
- \sum_{j=N}^{2 N} \Big[ \sum_{\ell=j-N}^N \PPP(u_\ell, u_{j - \ell}) \Big] t^j~. $$
(ii) In particular, assume
\beq u_{j+1} = {1 \over j+1} \sum_{\ell=0}^j \PPP(u_{\ell}, u_{j-\ell})
\quad \mbox{for $j=0,...,N-1$}~; \label{recur} \feq
then
\beq {d u^N \over d t}(t) - \PPP(u^N, u^N)(t) =
- \sum_{j=N}^{2 N} \Big[ \sum_{\ell=j-N}^N \PPP(u_\ell, u_{j - \ell}) \Big] t^j =
O(t^N)~\mbox{for $t \vain 0$}~. \label{eune} \feq
(iii) If \rref{recur} is used to define recursively $u_1,...,u_N$,
it produces a sequence of elements of $\HM{n+1}$ under
the condition $u_0 \in \HM{n+1+N}$. More precisely,
from $u_0 \in \HM{n+1+N}$ it follows $u_j \in \HM{n+1+N-j} \subset \HM{n+1}$
for $j=1,...,N$. \par\noindent
(iv) Let $u_0 \in \HM{n+N+1}$ and use \rref{recur} to define
$u_j$ for $j=1,...,N$. Then
\beq \| {d u^N \over d t}(t) - \PPP(u^N, u^N)(t) \|_n \leqs \ep_n(t)
\qquad \mbox{~~for $t \in [0,+\infty)$}~, \label{est} \feq
\beq \ep_n(t) := K_n \sum_{j=N}^{2 N} \Big[ \sum_{\ell=j-N}^N \| u_\ell \|_n \| u_{j - \ell}\|_{n+1} \Big] t^j~
\mbox{for $t \in [0,+\infty)$}~.
\label{est2} \feq
\end{proposition}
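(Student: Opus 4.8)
The plan is to treat the four parts in order, since (ii) relies on the identity from (i), (iii) supplies the regularity that makes the bilinear estimates meaningful, and (iv) merely takes norms in the formula obtained in (ii). Throughout, the decisive structural fact is that $\PPP$ is \emph{bilinear}, so $\PPP(u^N,u^N)$ is a genuine polynomial in $t$ whose coefficients are Cauchy-product sums of the $\PPP(u_\ell,u_m)$.

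For part (i) I would compute the two errors directly. The datum error \rref{daer} is immediate, since the constant term of $u^N$ in \rref{defun} is exactly $u_0$, whence $u^N(0)-u_0=0$. For the differential error I would differentiate term by term, writing ${d u^N / d t} = \sum_{j=0}^{N-1}(j+1)u_{j+1}t^j$, and expand $\PPP(u^N,u^N)$ by bilinearity as the double sum $\sum_{\ell,m=0}^N \PPP(u_\ell,u_m)t^{\ell+m}$. Collecting powers $t^j$ via $j=\ell+m$ yields an inner index $\ell$ running over $\max(0,j-N)\leqs \ell \leqs \min(N,j)$; splitting this polynomial at the threshold $j=N$ (so $j\leqs N-1$ gives the range $0\leqs \ell \leqs j$ and $j\geqs N$ gives $j-N\leqs \ell \leqs N$) and subtracting from the derivative produces exactly \rref{eun}. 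Part (ii) is then an immediate corollary: the recurrence \rref{recur} says precisely that every bracket in the first sum of \rref{eun} vanishes, leaving only the high-order tail \rref{eune}.

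Part (iii) is the technical core, which I would prove by induction on $j$ using the basic inequality \rref{basic}. The base case $u_0\in\HM{n+1+N}$ is the hypothesis. For the inductive step, assuming $u_\ell\in\HM{n+1+N-\ell}$ for $\ell=0,\dots,j$, I would estimate each summand $\PPP(u_\ell,u_{j-\ell})$ in \rref{recur}. The key bookkeeping is that $\PPP$ costs one derivative on its second slot: applying \rref{basic} at the working order $m:=n+N-j$ requires $u_\ell\in\HM{m}$ and $u_{j-\ell}\in\HM{m+1}$. Both hold by the inductive hypothesis together with the embeddings $\HM{n+1+N-\ell}\subset\HM{m}$ (valid since $\ell\leqs j+1$) and $\HM{n+1+N-(j-\ell)}\subset\HM{m+1}$ (valid since $\ell\geqs 0$); one also checks $m=n+N-j\geqs n+1>d/2$ for $j\leqs N-1$, so \rref{basic} legitimately applies. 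Hence $\PPP(u_\ell,u_{j-\ell})\in\HM{m}$ for every $\ell$, and $u_{j+1}$, being a finite average of these, lies in $\HM{m}=\HM{n+1+N-(j+1)}$, closing the induction.

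Finally, for part (iv) I would take the $\|\cdot\|_n$ norm of the right-hand side of \rref{eune}. Since $t\geqs 0$, the triangle inequality bounds it by $\sum_{j=N}^{2N}\sum_{\ell=j-N}^N \|\PPP(u_\ell,u_{j-\ell})\|_n\,t^j$, and part (iii) guarantees that every $u_\ell$ with $\ell\leqs N$ lies in $\HM{n+1}\subset\HM{n}$, so \rref{basic} at order $n$ gives $\|\PPP(u_\ell,u_{j-\ell})\|_n\leqs K_n\|u_\ell\|_n\|u_{j-\ell}\|_{n+1}$; substituting yields exactly the estimator \rref{est2} and hence \rref{est}. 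The only delicate point throughout is the index arithmetic in (iii): one must verify that both embedding inequalities hold \emph{uniformly} for all $\ell$ in the summation range and that the working order stays above $d/2$. I expect this index tracking to be the main obstacle, though it is routine once the two inequalities $\ell\leqs j+1$ and $\ell\geqs 0$ are isolated.
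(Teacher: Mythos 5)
Your proposal is correct and follows essentially the same route as the paper: direct expansion and reindexing of the Cauchy product for (i), cancellation via the recurrence for (ii), induction on $j$ with the basic inequality \rref{basic} at the shifting order $n+N-j$ for (iii) (which the paper only sketches with ``etc.''), and the triangle inequality plus \rref{basic} at order $n$ for (iv). The index bookkeeping you flag as the delicate point is handled correctly.
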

\begin{proof}
(i) \rref{daer} is obvious; let us prove \rref{eun}.
To this purpose, we note that
$$ {d u^N \over d t} - \PPP(u^N, u^N)  =
{d \over d t} \big( \sum_{\ell=0}^N u_\ell t^\ell \big)
- \PPP\big(\sum_{\ell=0}^N u_\ell t^\ell,
\sum_{h=0}^N u_h t^h\big) $$
$$ = \sum_{\ell=1}^N \ell u_\ell t^{\ell-1} - \sum_{\ell,h=0}^N \PPP(u_\ell, u_h)
t^{\ell+h}
= \sum_{j=0}^{N-1} (j+1) u_{j+1} t^{j} - \sum_{j=0}^{2 N}
\big[ \sum_{(\ell,h) \in I_{N j}} \PPP(u_\ell, u_h) \big] t^j~, $$
$$ I_{N j} := \{ (\ell,h) \in \{0,...,N\}^2~|~\ell+h= j \}~. $$
One easily checks that
$$ j \in \{0,...,N-1\}~\Rightarrow~
I_{N j} = \{ (\ell,j-\ell)~|~\ell \in \{0,...,j\} \}~, $$
$$ j \in \{N,...,2 N\}~\Rightarrow~
I_{N j} = \{ (\ell,j-\ell)~|~\ell \in \{j-N,...,N\} \}~; $$
this readily yields the thesis \rref{eun}. \parn
(ii) Obvious. \parn
(iii) Let $u_0 \in \HM{n+1+N}$ and define $u_1,...,u_N$
via the recursion relation \rref{recur}. Then
$u_1 = \PPP(u_0, u_0) \in \HM{n+N}$, $u_2 = (1/2) \PPP(u_0, u_1) + (1/2) \PPP(u_1, u_0)
\in \HM{n+N-1}$, etc. \, . \parn
(iv) Eq. \rref{eune} implies
$\| (d u^N/d t)(t) - \PPP(u^N, u^N)(t) \|_n$ $\leqs
\sum_{j=N}^{2 N} \Big[ \sum_{\ell=j-N}^N \| \PPP(u_\ell, u_{j - \ell}) \|_n \Big] t^j$. On
the other hand Eq. \rref{basic} gives
$\| \PPP(u_\ell, u_{j - \ell}) \|_n \leqs K_n  \| u_\ell \|_n \| u_{j - \ell}\|_{n+1}$,
whence the thesis \rref{est} \rref{est2}.
\end{proof}
\vfill \eject \noindent
\section{A special case of the previous framework: the Euler equations
on $\Tt$, with the Behr-Ne$\check{\mbox{c}}$as-Wu
initial datum.}
In this section we consider the Euler Cauchy problem
\rref{cae} with space dimension and Sobolev order
\beq d = 3~, \qquad n = 3~; \feq
the initial datum is
\beq u_0 := \sum_{k = \pm a, \pm b, \pm c} u_{0 k} e_k~, \label{unec} \feq
$$ a := (1,1,0),~~ b := (1,0,1),~~ c := (0,1,1)~; $$
$$ u_{0, \pm a} := (2 \pi)^{3/2} (1,-1,0), \quad u_{0, \pm b}
:= (2 \pi)^{3/2} (1,0,-1), \quad u_{0, \pm c} := (2 \pi)^{3/2} (0,1,-1) $$
(of course, being a Fourier polynomial, $u_0$ belongs to $\HM{m}$
for each $m \in \reali$). The above initial
datum is considered by Behr, Ne$\check{\mbox{c}}$as and
Wu in \cite{Nec}; it is analyzed with a similar attitude in
\cite{bnw} (and, from a different viewpoint, in \cite{appeul}).
In both papers \cite{Nec} \cite{bnw}, attention
is fixed on the function $u^N(t) = \sum_{j=0}^N u_j t^j$ for
a rather large value of $N$,
where the $u_j$'s are determined for $j=1,...,N$ by the recursion relation
\rref{recur}. The $u_j$'s are Fourier polynomials
and can be calculated exactly by computer algebra
methods; such computations
are performed in \cite{Nec} for $N=35$,
and in \cite{bnw} up to $N=52$ (using, respectively,
the $C^{++}$ and the Python languages).
\par
The Python program of \cite{bnw} gives exact
expressions for the $u_{j}$'s, whose Fourier components
are rational (up to factors $(2 \pi)^{3/2}$);
for large $j$, these expressions are terribly complicated.
Here, to give a partial illustration of such Python computations
we consider the Fourier components $u^{52}_k(t)$ for $k = (1,1,0)$
and $k = (0,0,2)$, and report the graphs of
the functions $t \mapsto |u^{52}_k(t)|$ for these wave vectors:
see Figures 1 and 2. \par
In both papers \cite{Nec} \cite{bnw}, computations are used to get hints
about $\lim_{N \vain + \infty} u^N$, giving
the exact solution of the Euler Cauchy problem
on the time interval where the limit exists;
however the statements of \cite{Nec} \cite{bnw} rely on the assumption
that certain facts on the $N \vain + \infty$ limit
can be extrapolated from $u^{35}$ or $u^{52}$.
In particular \cite{Nec} makes the conjecture,
disputed in \cite{bnw}, that the solution
of the Euler Cauchy problem blows up
for $t \vain \tau^{-}$, with $\tau \simeq 0.32$
\par
In the present work we make no conjecture or extrapolation
about the $N \vain + \infty$ limit and just consider
the function $u^{52}$ of \cite{bnw} according to
the general framework of approximate solutions
and control inequalities. This
approach produces: \par\noindent
(i) a rigorous lower bound on the interval of existence
of the exact solution $u$ of the ($d=3, n= 3$) Cauchy problem \rref{cae}; \par\noindent
(ii) a bound on $\| u(t) - u^{52}(t) \|_3$. \par
To get these results we regard $u^{52}$ as an approximate solution of \rref{cae},
using the tautological datum error and growth estimators
\beq \delta_3 := 0~; \quad \Dd_3(t) := \| u^{52}(t) \|_3~,~~\Dd_{4}(t) := \| u^{52}(t) \|_{4}
~~\mbox{for $t \in [0,+\infty)$} \label{est1} \feq
(concerning $\delta_3$, we recall that $u^{52}(0) - u_0 = 0$). For $m=3,4$
one has $\Dd_m(t) = (2 \pi)^{3/2} [\sum_{j=0}^{52} d_{m j} t^{2 j}]^{1/2}$
where the $d_{m j}$ are rational coefficients; the Python program employed
for our work \cite{bnw} computes exactly these coefficients. For
$m=3$ these coefficients are reported in \cite{bnw}, in a $16$-digits decimal
representation (see Eq. (5.12) of \cite{bnw}, not containing
the factor $(2 \pi)^{3/2}$ due to a different normalization
of the norm $\|~ \|_3$); we have no room to report here the coefficients
of the $m=4$ case. Figures 3 and 4 contain the graphs
of the functions $t \mapsto \Dd_3(t), \Dd_4(t)$. \par
Let us pass to the differential error estimator for $u^{52}$; we use for it
the function $\ep_3$ defined by
\rref{est2} with $n=3$ and $K_3 = 0.323$, see \rref{k3g3}.
$\ep_3$ is computed exactly by our Python program; again,
the explicit expression is too complicated to be reported.
(The tautological error estimator
$\ep^{*}_3(t) :=$
$\| (d u^{52}/d t)(t) - \PPP(u^{52}, u^{52})(t) \|_3$
$= \| \sum_{j=52}^{104} \Big[ \sum_{\ell=j-52}^{52} \PPP(u_\ell, u_{j - \ell}) \Big] t^j \|_3$
is more accurate, but it has an even more complicated expression;
its calculation by computer algebra is too expensive.) \par
For the graph of $\ep_3$ and some information on its numerical
values, see Figure 5 and its caption.
With the previous ingredients, we build the following ``control Cauchy problem'':
find $\Rr_3$ such that
\beq \Rr_3 \in C^1([0, \Tc), \reali),~~
{d \Rr_3 \over d t} = (G_3 \Dd_3 + K_3 \Dd_{4}) \Rr_3 + G_3 \Rr^2_3 + \ep_3,~~
\Rr_3(0) = 0 \label{contcau} \feq
($G_3 = 0.438$, see again \rref{k3g3}). This control problem
has a unique maximal solution $\Rr_3$,
which is strictly increasing and thus positive for $t \in (0,\Tc)$.
Of course, this $\Rr_3$ fulfils as equalities Eqs.
\rref{cont1} \rref{cont2} (with $\nu=0$). \par
Once we have $\Rr_3 : [0,\Tc) \vain [0,+\infty)$, due to Proposition
\ref{main} we can grant that: \parn
(i) The maximal solution $u$ of the ($n=3$) Euler Cauchy problem
\rref{cae} is defined on an interval $[0,T)$ with $T \geqs \Tc$; \parn
(ii) It is
\beq \| u(t) - u^{52}(t) \|_3 \leqs \Rr_3(t) \qquad \mbox{for $t \in [0, \Tc)$}~. \label{itis} \feq
The function $\Rr_3$ can be determined numerically by a cheap
computation using any package for ODEs, e.g. Mathematica (the result is
reliable, since \rref{contcau} is the Cauchy problem for a simple
ODE in one dimension). This numerical computation indicates that
the (maximal) domain of $\Rr_3$ is $[0, \Tc)$, with
\beq \Tc = 0.242... ; \label{tc} \feq
After having been extremely small for most of the time
between $0$ and $\Tc$,
$\Rr_3(t)$ diverges abruptly for $t \vain \Tc^{-}$; for
the graph of this function and some information
on its numerical values, see Figure 6 and its caption.
Due to \rref{tc}, we can grant that the solution $u$
of the Euler Cauchy problem \rref{cau} exists on
a time interval of length $T \geqs 0.242$ (this is
four times larger than the lower bound on $T$ obtained in
\cite{appeul} using a Galerkin approximate solution). \par
Eq. \rref{itis} and the previously described
behavior of $\Rr_3$ ensure that $u^{52}(t)$
approximates with extreme precision $u(t)$
on most of the time interval $[0, \Tc)$.
We remark that \rref{itis} can be used to infer
other interesting estimates about $u - u^{52}$, e.g.,
\beq | u_k(t) - u^{52}_k(t) | \leqs {\Rr_3(t) \over |k|^3}
\qquad \mbox{for $k \in \Zt \setminus \{0 \}$, $t \in [0, \Tc)$}~;
\feq
this follows from \rref{itis} and from
the elementary inequality $| v_k | \leqs \| v \|_3/|k|^3$, holding
for all $v \in \HM{3}$ and $k \in \Zt \setminus \{ 0 \}$
(recall that $\| v \|^2_3 = \sum_{k \in \Zt \setminus \{ 0 \}}
|k|^6 | v_k |^2$). \par
\begin{figure}
\hskip -3cm \noindent
\parbox{3in}{
\includegraphics[
height=2.0in,
width=2.8in
]%
{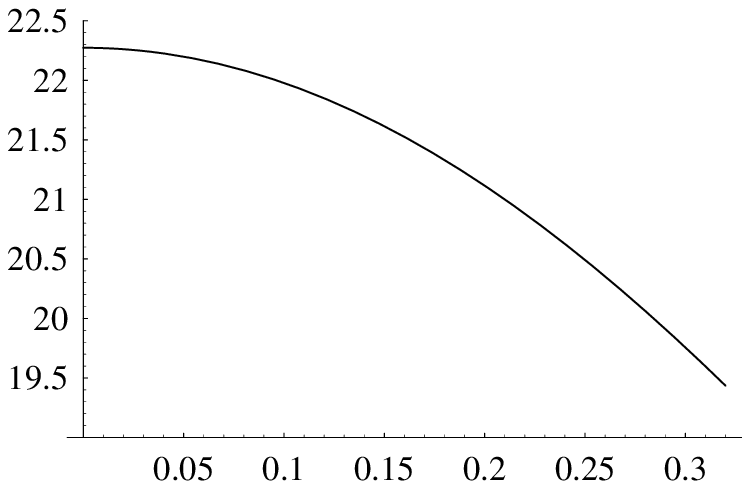}%
\par
{
{
{\textbf{Figure 1.~} Plot of $|u^{52}_{(1,1,0)}(t)|$
for $t \in [0,0.32]$.
}}
\par}
\label{f1a}
}
\hskip 0.3cm
\parbox{3in}{
\includegraphics[
height=2.0in,
width=2.8in
]%
{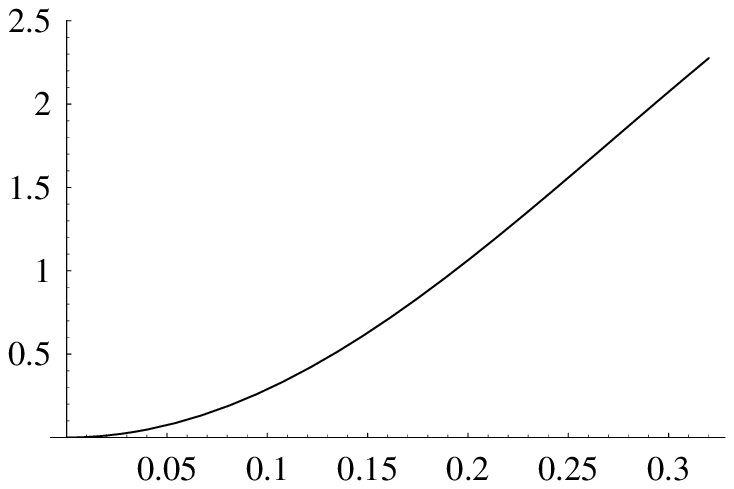}%
\par
{
{
{\textbf{Figure 2.~} Plot of $|u^{52}_{(0,0,2)}(t)|$
for $t \in [0,0.32]$.
}}
\par}
\label{f1d}
}
\vskip 0.3cm  \noindent
{~}
\hskip -3.2cm \noindent
\parbox{3in}{
\includegraphics[
height=2.0in,
width=2.8in
]%
{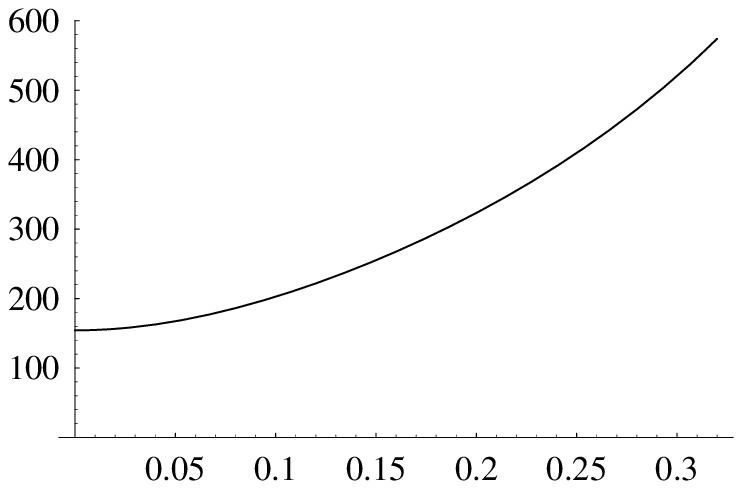}%
\par
{
{
{\textbf{Figure 3.~} Plot of $\Dd_3(t)$ for
$t \in [0,0.32]$.
}}
\par}
\label{f1b}
}
\hskip 0.3cm
\parbox{3in}{
\includegraphics[
height=2.0in,
width=2.8in
]%
{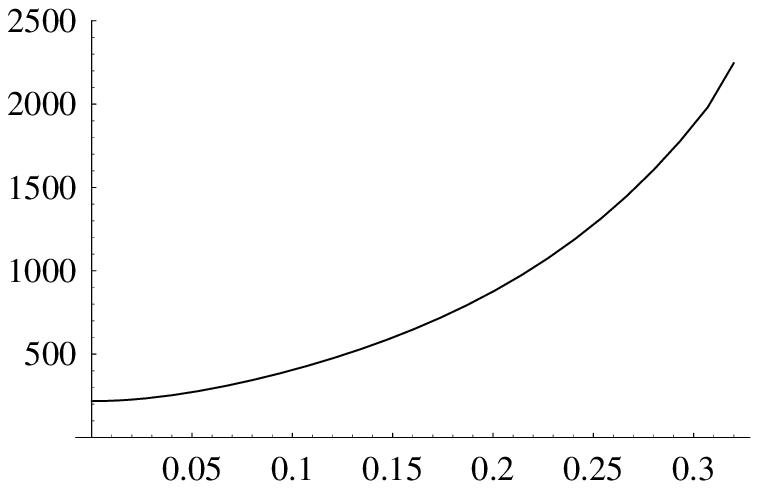}%
\par
{
{
{\textbf{Figure 4.~} Plot of $\Dd_4(t)$ for
$t \in [0,0.32]$.
}}
\par}
\label{f1e}
}
\vskip 0.5cm \noindent
{~}
\hskip -3.2cm
\parbox{3in}{
\includegraphics[
height=2.0in,
width=2.8in
]%
{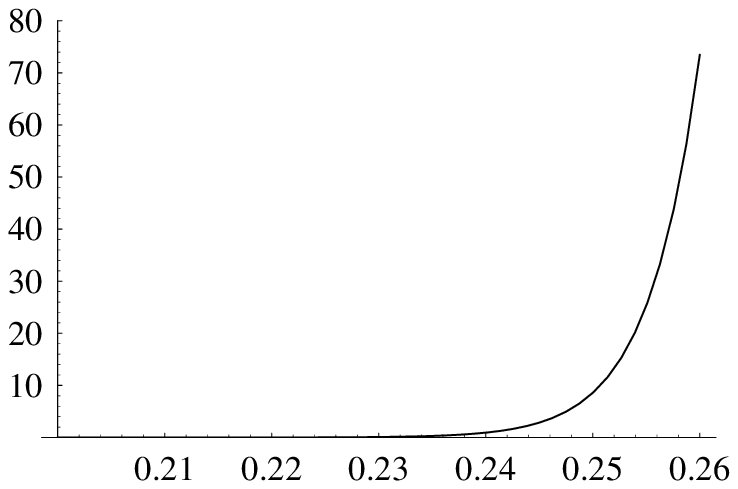}%
\par
{
{
{\textbf{Figure 5.~} Plot of $\epsilon_3(t)$
for $t \in [0.20, 0.26]$. One has:
$\epsilon_3(t) < 10^{-20}$ for $t \in [0,0.10]$;
$\epsilon_3(t) < 10^{-4}$ for $t \in (0.10, 0.20]$;
$\epsilon_3(t) < 10^{-3}$ for $t \in (0.20, 0.21]$;
$\epsilon_3(t) < 8.6 \times 10^{-3}$ for $t \in (0.21, 0.22]$;
$\epsilon(t) < 0.094$ for $t \in (0.22, 0.23]$;
$\epsilon(t) < 0.93$ for $t \in (0.23, 0.24]$,
$\epsilon_3(t) < 8.6$ for $t \in (0.24, 0.25]$;
$\epsilon_3(t) < 74$ for $t \in (0.25, 0.26]$.
}}
\par}
\label{f1c}
}
\hskip 0.3cm
\parbox{3in}{
\vskip -0.9cm \noindent
\includegraphics[
height=2.0in,
width=2.8in
]%
{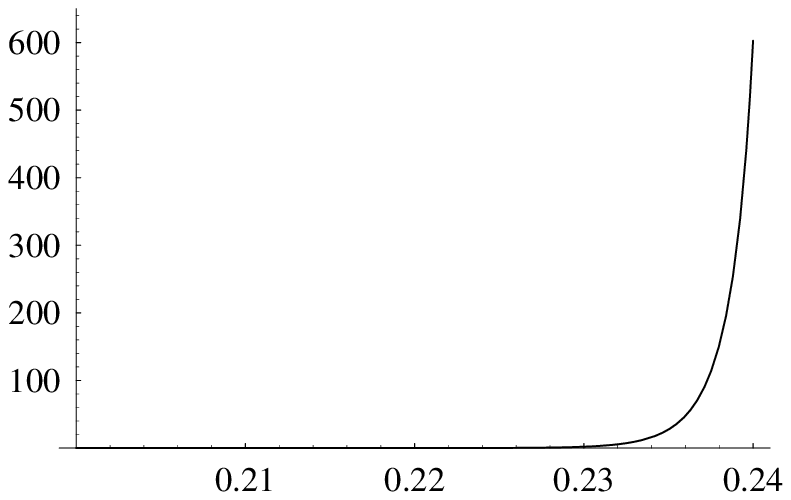}%
\par
{
{
{\textbf{Figure 6.~} Plot of $\Rr_3(t)$
for $t \in [0.20, 0.24]$. One has:
$\Rr_3(t) < 2 \times 10^{-6}$ for
$t \in [0,0.20]$; $\Rr_3(t) < 1.2 \times 10^{-4}$
for $t \in (0.20, 0.21]$; $\Rr_3(t) < 0.013$ for
$t \in (0.21, 0.22]$; $\Rr_3(t) < 2$ for
$t \in (0.22, 0.23]$; $\Rr_3(t) < 610$ for $t \in (0.23, 0.24]$.

}}
\par}
\label{f1f}
}
\end{figure}
\vfill \eject \noindent
\textbf{Acknowledgments.} This work has been partially supported by
by INdAM, INFN and by MIUR, PRIN 2010
Research Project "Geometrical methods in the theory of nonlinear waves and applications".
\vskip 0.4cm \noindent
{~}

\vskip 0.8cm \noindent
\end{document}